\newlist{notes}{enumerate}{1}
\setlist[notes]{label=Note: ,leftmargin=*}
\numberwithin{equation}{section}
\DeclareFontFamily{OT1}{rsfs}{}
\DeclareFontShape{OT1}{rsfs}{n}{it}{<-> rsfs10}{}
\DeclareMathAlphabet{\mathscr}{OT1}{rsfs}{n}{it}
\theoremstyle{plain}
\newtheorem{theorem}{Theorem}[section]
\newtheorem{lemma}[theorem]{Lemma}
\theoremstyle{definition}
\newtheorem{definition}[theorem]{Definition}
\begin{document}

\title[EUCLIDEAN RHYTHM]{EUCLIDEAN RHYTHM WITH PALINDROMIC RESTS}

\author{Paraj Mukherjee}

\begin{abstract} The structure of the Euclidean algorithm can be used to generate a very large family of rhythms. In this paper, we explore a very specific family of Euclidean rhythms, the Euclidean rhythms in which the rests have a palindromic structure. We look at the structural and geometric properties of such rhythms; most of the properties have a certain combinatorial interest. We also show how operations can be defined on such rhythms to construct a larger family of rhythms. Towards the end of the paper, we also show applications of these rhythms in contemporary Indian Classical music.
\end{abstract}       

\maketitle


\section{Introduction}
Given \(n\) time intervals and \(p<n\) onsets, we try to distribute the onsets as evenly as possible among the intervals. The very nature of the Euclidean algorithm makes the Euclidean rhythms have a very interesting property: the distance between all pairs of onsets gets maximised. We may represent each rest by \(0\) and each onset by \(1\). The problem then becomes constructing a binary sequence of \(n\) bits with \(p\) 1's such that the 1's are spread as evenly as possible among the 0's \cite{toussaint}. The generated rhythm is denoted by \(E(p,n)\). Let \(k=n-p\) be the number of rests. If \(p \big| n\), then we are done! The number of rests will be equal between any two consecutive onsets. For example, if \(p=4,n=16\), \(E(p,n)\) becomes \([\times \cdot \cdot \cdot \times \cdot \cdot \cdot \times \cdot \cdot \cdot \times \cdot \cdot \cdot \times]\) (In this representation, \(\times\) denote the onsets and \(\cdot \) denote the rests. The last onset in this representation corresponds to the first onset of the next cycle). As all the Euclidean rhythms have this notion of \(evenness\), it is perhaps very natural to ask what \(E(p,n)\) would look like if the rests had a palindromic structure.\footnote{Palindromes are the most symmetric numbers. Both the concepts of "evenness" and "symmetry", when put together, result in structures that are aesthetically  'as uniform as possible'.} The problem of primary interest is when \(k\) and \(n\) are relatively prime numbers. The Euclidean rhythms can be generated by many methods, like some modification of Bresenham's line drawing algorithm using some matrix constructions \cite{morill}. In this paper, we use the Bjorklund's algorithm \cite{bjork100}. If some pattern of digits occurs more than twice, we refer to them as units. In each step, we have two parts: the left side consists of some patterns that repeat themselves (units), and on the right, we have the remainder units, which we distribute among the patterns on the left in each step. So initially, we have \(p\) `1's and \(k\) `0's side by side. The process continues till no remainder units are left or one remainder unit is left. At the end of it, we are left with the left side forming the majority of the string and consisting of recurssive, evenly spread onsets \cite{bjork99}. \\

\newpage
\section{Preliminaries and notations} 
The rhythm generated by \(p\) onsets and \(k\) rests is denoted by \(E(p,n)\), where \(p+k=n\). To every corresponding \(E(p,n)\), \(V(p,k)\) denote the number where each digit is the number of rests between the corresponding onsets. The \(V(p,k)\) representation of any rhythm can be written in general. \\
A rhythm of \(n\) time intervals can be represented by  a subset of \(\{0,1,...,n-1\}\) which represent the pulses that are onsets. We can use the additive group \(\mathbb{Z}_n\) to represent any rhythm. Any rhythm can be written as \(R=\{A_j: j \in \{0,1,2,...n-1\}\}\) where the onsets occur at the \(A_j\)th pulses. To represent it geometrically, we place \(n\) points uniformly around the circumference of a circle and then label them clockwise. For any two points \(A_i, \ A_j\) on the circle we can define distance \(\delta\) between them in two senses, clockwise and anti-clockwise which are \(j-i\) and \(|j-i \ (mod \ n)|\) respectively. We define the least of these two distances \(\delta_1\) as the chordal distance, which belongs to the set \(\{0,1,...,[\frac{n}{2}]\}\). The multiplicity of \(\delta\) and \(\delta_1\) is defined as the number of distinct pairs in the rhythm that have the distances \(\delta\) and \(\delta_1\) respectively, between them.  \\
A rhythm is called Erdos-Deep if for every multiplicity \(\{1,2,...,p-1\}\) there exists one distance with that multiplicity. 
A rhythm can be rotated by an integer \(l \geq 0\), the new rhythm is \(R_1=\{A_{(i+l) \ mod \ n} : A_i \in R\}\). A rhythm can be scaled or multiplied by an integer \(l\geq 1\), \(R_2 =\{A_{li} : A_i \in R\}\) and is of time interval \(ln\).

\begin{definition}
\(k,p \in \mathbb{Z}\) such that \(k>p\)

\begin{enumerate}
   \item if \(p=0\), \(f(k,p)=k\)
   \item if \(p>0\), \(f(k,p)=f(p,k \ mod \ p)\)
\end{enumerate} 
 
\end{definition}
Bjorklund's algorithm follows exactly these recursive steps. It is very clear that this has the same structure as the Euclidean algorithm.  \\
We also state a well-known result by Dirichlet on prime numbers in arithmetic progressions, which will be used once later on: An \(AP\) in which the first member and the common difference
are pairwise prime integers has infinitely many primes in it. \\
Two rhythms are called homometric if they have the same chordal distance histogram, i.e., if a chordal distance \(\delta\) occurs \(e\) times in one, it also occurs \(e\) times in the other one as well. If \(P\) and \(Q\) are homometric, two vertices (onsets) \(p_i\), \(q_i\) on \(P\), \(Q\) respectively, are called isospectral if they have the same histogram of chordal distances to all other vertices in their respective rhythms. \\
In Section \(10\) we show a few applications of Euclidean rhythms with palindromic rests in Indian Classical music. One needs basic knowledge of Indian Classical music to understand the compositions.

\section{Evenly-spread rhythms}
Euclidean rhythms are a type of rhythm that is characterised by an even distribution of onsets across a given time interval. These rhythms are created by taking a number of beats and distributing them as evenly as possible across the time interval. \\
We first describe what an almost-evenly spread rhythm is. We say a rhythm is `almost evenly spread' if the difference between the chordal distances between every consecutive onset is `not very large'. This is obviously not a very mathematical definition. Given two almost-evenly spread rhythms, one can compare them; if the sum of digits of both the \(V(p,k)\)s is the same, the one with more digits is more evenly spread. For a given \((p,k)\) the most evenly spread rhythm is unique (i.e., the digits of the \(V(p,k)\) representation is unique). For maximum evenly-spreadness, the sum of all possible chordal distances has to be the maximum, i.e. \(\sum_{0 \leq i < j \leq n-1}|j-i \ (mod \ n)| \) is maximum. There are also palindromic rhythms, which are almost evenly spread but not Euclidean. Out of all these various most-evenly spread rhythms, we consider the ones where \(V(p,k)\) is a palindrome. 
\begin{notes} 
    \item It may so happen that the rhythm we get by Bjorklund's algorithm has no palindromic structure, but we can get one by considering some rotation of it. That rhythm is also an Euclidean rhythm by definition. In the next section, we find the rhythms that have a palindromic structure using Bjorklund's algorithm; the solution set can be expanded by considering those rotated rhythms as well. We will continue this discussion a bit more in Section 7.
\end{notes}

\section{The Euclidean Rhythms with Palindromic rests} 
We are given \(p\) `1's and \(k\) `0's, \(p < k\) and \(gcd(p,k)=1\). In that case, one remainder unit is left in the end. We demonstrate when the number representing the rests will be a palindrome. We denote the set of all such rhythms by \(\mathcal{T}\).     
       The problem can be broken in two cases. 
       
\begin{enumerate} 
    \item \(2p>k>p\); i.e., the acting direction of the algorithm will change right after \(1\) step
    \item \(k>2p,k>3p,.... k>[\frac{k}{p}]p\); i.e., the acting direction of the algorithm will change after some finite steps greater than \(1\)
    
 \end{enumerate}
In second case, we have \[1\underbrace{000....}_{[\frac{k}{p}]-1} 1000.... 1000........  \underbrace{0000....}_{k-([\frac{k}{p}]-1)p}  \]  

Now in this structure, \(2p>k-([\frac{k}{p}]-1)p>p\) so the acting direction changes after \(1\) step. Therefore, the structure in the second case is isomorphic to one in the first case (each \(1000...\) can be thought of as a \(1\)). Let \(P'\) be any structure in second case, which is isomorphic to \(L'\) of first case. For any \((p,k)\), the structure itself follows case \(1\) or is isomorphic to one following case \(1\). Due to structural isomorphism, if \(L' \in \mathcal{T}\), then \(P' \in \mathcal{T}\). We call such a \(L'\) a base.  On the other hand, we also see that if \(P' \in \mathcal{T}\), then \(L' \in \mathcal{T}\). From this one-to-one mapping see that our problem gets reduced to case 1 only, and solutions for case 2 can be found accordingly. In the first case, the algorithm can go on for an arbitrary number of steps, depending on the values of \(k\) and \(p\) and the acting direction may again change arbitrarily many times after the second step. \\
\begin{lemma}
The number representing the rests for values of \((p,k)\) for which the algorithm carries out 3 steps and does not change the acting direction after initially changing in the second step will be a palindrome. 
\end{lemma}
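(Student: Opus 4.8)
The plan is to make the hypothesis completely explicit and then run Bjorklund's algorithm by hand; everything reduces to bookkeeping. Since we are in case $1$ we have $\gcd(p,k)=1$ and $p<k<2p$, and I would track the algorithm by the pair (number of units, number of remainder units), the ``acting direction'' at a given step being the sign of (units $-$ remainders) at the start of that step. The initial state is $(p,k)$ with difference $p-k<0$. The first step pairs each $1$ with a $0$, producing $p$ units $10$ and, since $k<2p$, a remainder of $r_1:=k-p<p$ zeros; the state becomes $(p,r_1)$ with difference $p-r_1>0$, so the direction has flipped --- this is the change ``in the second step''. The second step appends those $r_1$ remainders, producing $r_1$ units $100$ followed by $r_2:=p-r_1=2p-k$ leftover units $10$, so the state is $(r_1,r_2)$.

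Next I would convert the two remaining conditions into arithmetic. Bjorklund's algorithm stops once the remainder count drops to $\le 1$, so ``the algorithm carries out exactly $3$ steps'' means it has not stopped after step $1$ or step $2$ --- i.e. $r_1\ge 2$ and $r_2\ge 2$ --- but does stop after step $3$ --- i.e. $|r_1-r_2|\le 1$. The third step begins in state $(r_1,r_2)$, and ``the direction does not change'' (it was positive at step $2$) forces $r_1>r_2$. Hence $r_1-r_2=1$, which together with $r_1=k-p$ and $r_2=2p-k$ is exactly $2k=3p+1$. Writing $p=2t+1$ this says $k=3t+2$ (so $r_1=t+1$, $r_2=t$, and coprimality is automatic), and $r_2\ge 2$ means $t\ge 2$. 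Thus the hypothesis is equivalent to $(p,k)=(2t+1,\,3t+2)$ with $t\ge 2$, i.e. $(p,n)=(2t+1,\,5t+3)$.

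Now the third step: in state $(r_1,r_2)=(t+1,t)$ with $r_1>r_2$, it appends the $r_2=t$ remainder units $10$ onto $t$ of the $100$-units, yielding $t$ units $10010$ followed by the single leftover $100$; the remainder is now one unit, so the algorithm halts. Therefore $E(p,n)$ is $t$ copies of $10010$ followed by one $100$, a word of length $5t+3=n$ with $2t+1=p$ onsets. Reading the number of rests between successive onsets cyclically, starting at the leading onset, each block $10010$ contributes first a $2$ then a $1$, and the final block $100$ together with the wrap-around contributes one last $2$. Hence $V(p,k)=21212\cdots 212$, the alternating word of odd length $p$ that begins and ends with $2$, which is visibly a palindrome.

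The only place I expect to need care is the symbolic run of the algorithm in the last two paragraphs --- exactly which pattern becomes the ``unit'' at each step, and whether the processed blocks or the leftover blocks come first in the string --- so I would verify the conclusion directly on $E(5,13)$ and $E(7,18)$ before finalizing. Once the three steps are pinned down the palindromic conclusion is immediate, and the same template should serve for the later lemmas handling more steps.
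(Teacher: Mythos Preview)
Your argument is correct and is essentially the same proof as the paper's: you trace Bjorklund's algorithm through three steps, translate the hypotheses into $2k-3p=1$, and read off the alternating word $2121\cdots 2$. The only differences are cosmetic---you introduce the parameter $t$ (so $(p,k)=(2t+1,3t+2)$) and derive the exclusion $p\neq 1,3$ from $r_2\ge 2$ rather than by separate inspection, which is slightly cleaner than the paper's treatment.
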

\begin{proof}

The fact that \((p,k)\) for which the condition holds will correspond to a solution can easily be seen by establishing a relation between \(k\) and \(p\). After the first step, there are \(p\) `\(10\)'s and \(k-p\) `\(0\)'s. After the second step, there are \(k-p\) `100's and \(2p-k\) `10's, and as the direction remains the same, \(2k>3p\). In the third step, there are \(2p-k\) `10010's and \(2k-3p\) `100's. Since the algorithm terminates here, we have
\begin{equation}
\label{2.1} 2k-3p=1 
\end{equation}
Therefore \((p,k)\) satisfying \ref{2.1} corresponds to a solution set for our problem. By calculating using Bjorklund's algorithm, we see that \(p=1,3\) does not give us a solution. Hence, \((p,k)\) of the solution set is  
\begin{equation}
 \{(p,k) \ | \ 2k-3p=1 ;p \neq 1,3\}
\end{equation} \\
The first few values of \((p,k)\) are \((5,8),(7,11),(9,14),(11,17),(13,20)\) and so on. The structures of these base Euclidean rhythms can be easily found out by doing the algorithm itself. The smallest rhythm of this family which is found for \(p=5, \ k=8\) looks like: \([\times \cdot \cdot \times \cdot \times \cdot \cdot \times \cdot \times \cdot \cdot \times]\) with \(V(p,k)=21212\). One can easily see that \(V(p,k)\) of the subsequent other base rhythms can be written by just adding a `\(12\)' at the end of the \(V(p,k)\) representation of previous base rhythm. Let the set of these base rhythms be set \(A\). \(V(p,k)\) representation of any element looks like:
\begin{equation}
A=\underbrace{21212.....2}_{some \ finite \ times}
\end{equation}
\end{proof}
We now consider two subcases under case \(1\)- 
\subsection{subcase \(1\)}Let the acting direction change in the third step, i.e., if \(3p>2k\). If the algorithm terminates after two more steps, the left part of the number starts as `1001010' but the right side ends as `10010' (which is the remainder unit). Hence, we have no solution. If the algorithm goes on for arbitrary finite steps, changing direction arbitrary many times, we get no solution. This is because the right side will always have `10' at the end, which does not make the rests have a palindromic structure. 
\subsection{subcase \(2\)}Let the algorithm not end after three steps. In this case, let the acting direction not change again in step \(4\). If the process terminates at the next step, we get no solution, as the left part of the number starts as `10010100' but the right side ends as `10010'. After step \(4\), the process can go on for arbitrary finite steps, with the direction changing arbitrarily many times. In each possibility, one can see that no solution can be found; every time either the end digits of \(V(p,k)\) representation do not match, or even if they do match, the number is not a palindrome. Now let the acting direction change in step \(4\). If the process terminates in the next two steps, we have solutions! In that case, \(V(p,k)\) representation looks like \[\underbrace{2122212....2122212}_{some \ finite \ times}\] One can see that \(V(11,30)=21222122212\), 
\(V(15,41)=212221222122212\) and so on. Observe that \(V(p,k)\) of the subsequent other base rhythms can be written by just adding a `2212' at the end of the \(V(p,k)\) representation of the previous base rhythm. Let the set of these base rhythms be set \(B\). If the process does not terminate in the next two steps, one can look at the subsequent cases and see that no more solutions can be found.

We can now generate the whole solution set by considering case 2. For example, from \(E(5,13)\) we can generate \(E(5,18), \ E(5,23), \ E(5,28) \) and so on, \(E(5,18)=[\times \cdot \cdot \times \cdot \times \cdot \cdot \times \cdot \times \cdot \cdot \times \cdot \times \cdot \cdot \times] \). In general, when considering the base rhythms of set \(A\), \(V(p,k)\) generally looks like
    \[[\underbrace{(m+1)m(m+1)m....(m+1)}_{arbitrary \ many \ times}], \ m \ge 1\] 
Let the set of these rhythms and the base rhythms of \(A\) be set \(\tau_1\).
Similarly, by considering set \(B\), \(V(p,k)\) looks like \[[\underbrace{(m+1)m(m+1)(m+1)(m+1)m(m+1)....(m+1)m(m+1)}_{arbitrary \ many \ times}], \ m \geq 1\]
Let the set of these rhythms and the base rhythms of \(B\) be set \(\tau_2\). From \(V(p,k)\) we thus get the whole family \(\mathcal{T}\) of Euclidean rhythms in which \(V(p,k)\)s are palindromes. \(\mathcal{T}= \tau_1 \cup \tau_2\). \\ 
Due to slightly more simplicity in its structure, we study \(V(p,k)\) instead of the rhythms themselves; all the results analogously apply for rhythms. The process by which this family of rhythms is generated holds little importance to any composer or musician; however, the sheer symmetry and overall geometry of distances do make these rhythms an attractive choice in any musical piece. The beauty or aesthetics of any rhythm depend mostly on its mathematical structure, which in turn depends on the ability of one's mind to see these mathematical structures subconsciously in the musical piece. Therefore, we study a few properties of \(\mathcal{T}\). The proofs of most of the results discussed hereafter hold more importance than the results themselves. Many of these properties may be quite elementary, but the overall beauty of any rhythm depends on all of them.

\section{Erdos-Deep}
The result that any element of \(\mathcal{T}\) is Erdos deep follows from the result that any Euclidean rhythm where \(gcd(p,n)=1\) is Erdos-deep. Nevertheless, we demonstrate a proof that shows the beautiful geometry of chordal distances and multiplicities in elements of \(\mathcal{T}\). The pattern of distances and the multiplicity pattern they follow are quite interesting from a musician's point of view.
\begin{theorem}
    Every element of \(\mathcal{T}\) is an Erdos-deep rhythm.
\end{theorem}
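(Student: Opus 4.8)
The plan is to exploit the explicit description $\mathcal{T}=\tau_1\cup\tau_2$ obtained above and to compute, for each family, the full histogram of chordal distances straight from the $V$-representation, rather than quoting the general fact that $\gcd(p,n)=1$ forces the Erdos-deep property. Fix a rhythm in $\mathcal{T}$, normalised so that its first onset sits at pulse $0$; then the $j$-th onset is at pulse $A_j=j+(v_0+v_1+\cdots+v_{j-1})$, where $v_0v_1\cdots v_{p-1}$ is the $V$-string and $p$ is odd, say $p=2s+1$. The organising idea is to sort the $\binom p2$ unordered pairs of onsets by their \emph{order}: for $\ell\in\{1,\dots,s\}$ let $D_\ell$ be the multiset of clockwise distances $A_{i+\ell}-A_i$ (indices mod $p$) between onsets lying $\ell$ apart. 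Since $p$ is odd, the orders $1,\dots,s$ together list every unordered pair exactly once, and $A_{i+\ell}-A_i=\ell+v_i+v_{i+1}+\cdots+v_{i+\ell-1}$ with indices modulo $p$.

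The core of the argument is that each $D_\ell$ is tiny. For a rhythm in $\tau_1$ the $V$-string is an alternation of $m$'s and $(m+1)$'s except at the one cyclic spot where two $(m+1)$'s are adjacent; hence the sum of any $\ell$ consecutive entries is either $\ell m+\lfloor\ell/2\rfloor$ or $\ell m+\lfloor\ell/2\rfloor+1$, so $D_\ell=\{c_\ell,c_\ell+1\}$ is a pair of consecutive integers with $c_\ell=\ell(m+1)+\lfloor\ell/2\rfloor$. One then counts, according to how a length-$\ell$ cyclic window meets that exceptional spot, how many of the $p$ starting positions yield the larger value $c_\ell+1$; this determines the two multiplicities inside $D_\ell$. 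For a rhythm in $\tau_2$ the $V$-string is only piecewise $2$-periodic, so there are several such spots rather than one, but the same analysis still produces $D_\ell=\{c_\ell,c_\ell+1\}$ together with its two multiplicities. Along the way one checks that every value occurring in $D_1,\dots,D_s$ is at most $\lfloor n/2\rfloor$, so that clockwise distance equals chordal distance (this uses $n=p+k>2p$), and that the sets $D_1,\dots,D_s$ are pairwise disjoint, so that together they account for exactly $2s=p-1$ distinct chordal distances.

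It then remains only to assemble everything: these $p-1$ chordal distances carry total multiplicity $\binom p2=1+2+\cdots+(p-1)$, and the multiplicities found above turn out to be a permutation of $\{1,2,\dots,p-1\}$, so that each $e\in\{1,\dots,p-1\}$ is the multiplicity of exactly one chordal distance — which is precisely the Erdos-deep property. The step I expect to be the real work is the bookkeeping behind this ``permutation'' claim, especially for $\tau_2$: one must keep track, for each $\ell$, of how many cyclic windows of length $\ell$ meet each of the several non-periodic spots in a way that raises the window sum, and then verify the disjointness of the clusters — elementary but genuinely fiddly.

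A clean way to side-step exactly this bookkeeping (at the cost of hiding the pretty cluster pattern the section advertises) is to notice that multiplying all onset positions by $k\equiv-p\pmod n$ carries the onset set to the block of consecutive residues $\{-s,\dots,s\}$ in $\mathbb{Z}_n$ — for $\tau_1$ this follows from the congruences $(2m+3)k\equiv1$ and $(m+2)k\equiv-s$ modulo $n$, and a similar computation handles $\tau_2$. Since multiplication by a unit of $\mathbb{Z}_n$ is a bijection on chordal distances that preserves their multiplicities, and the arithmetic progression of $p<n/2$ consecutive points plainly has chordal distance $d$ of multiplicity $p-d$ for each $d=1,\dots,p-1$, the Erdos-deep property transfers back to every element of $\mathcal{T}$. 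I would present the explicit cluster computation as the main proof and keep this scaling argument as a remark.
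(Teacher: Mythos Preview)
Your main line --- sorting the $\binom{p}{2}$ pairs by their order $\ell$, showing each $D_\ell$ consists of two consecutive integers, and reading off the multiplicities --- is exactly the paper's approach. The paper does this by example for the base family $A$: it lists the distances $[2],[3],[23],[33],[232],[2323],\dots$ and records their counts $\tfrac{p-1}{2},\tfrac{p+1}{2},p-1,1,\tfrac{p-3}{2},p-2,\dots$, then states the general formulae $p-q$ for $[23\cdots23]$ with $q$ blocks and $\tfrac{p-2m-1}{2}$ for $[23\cdots232]$ with $m$ twos, and asserts the analogous count for $B$ and $\tau_2$. Your version is the same computation, just organised more tightly (you name the clusters $D_\ell=\{c_\ell,c_\ell+1\}$ and make explicit the disjointness and the chordal check $c_\ell+1\le\lfloor n/2\rfloor$, which the paper leaves implicit).

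The one thing you add that the paper does not have is the scaling argument: multiplying the onset set by $k$ in $\mathbb{Z}_n$ to land on the block $\{-s,\dots,s\}$, for which the Erdos-deep property is immediate (chordal distance $d$ has multiplicity $p-d$). This is a genuinely different and shorter route --- it bypasses all the window-counting at the cost of not exhibiting the explicit distance/multiplicity correspondence that the section is meant to display. Keeping it as a remark, as you suggest, is the right call given the paper's stated aim of showing ``the beautiful geometry of chordal distances and multiplicities''.
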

\begin{proof}
We illustrate the proof for the set \(A\), the proof for \(\tau_1\) follows similarly. Let \(x \in A\). The set of multiplicities is - \(\{1,2,...,\frac{p-1}{2},\frac{p+1}{2},...,p-1\}\). The distance 6, i.e., [33] is unique and can occur in only 1 way. Through some simple combinatorial counting of \(\delta\) by writing the \(V(p,k)\) of \(x\), one can see that the distance 2, i.e., [2] occurs \(\frac{p-1}{2}\) times, and the distance 3, i.e., [3] occurs \(\frac{p+1}{2}\) times. Continuing this counting, we can see that the distance 5, i.e., [23] occurs \(p-1\) times, distance 7, i.e., [232] occurs \(\frac{p-3}{2}\) times, distance 10, i.e [2323] occurs \(p-2\) times. In general, distance [2323...23] with number of `2's \(=\) number of `3's \(= q\) (let) satisfying \(p-q > \frac{p+1}{2}\) occurs \(p-q\) times and for [2323...2] with number of `2's \(=m\), number of `3's \(=m-1\) satisfying \(\frac{p-2m-1}{2}>1\) occurs \(\frac{p-2m-1}{2}\) times. Hence, we exhaust the whole set of multiplicities in this process. We can do a similar analysis of the relations between distances and
multiplicities for elements of \(B\) and extend that to \(\tau_2\). This completes the proof.
\end{proof}

\section{Almost Winograd-Deep}
As introduced by Winograd in a 1966 class project report \cite{winograd}, we call a rhythm \(Winograd-deep\) if every chordal distance \(1,2,...,[\frac{n}{2}]\) has a
unique multiplicity. Now one of the structural difficulties we have with elements of \(\mathcal{T}\), is that several distances do not occur in the rhythms. For example, in elements of set \(A\), distances like \(1,4,9\) are not possible. So no element of \(\mathcal{T}\) is \(Winograd-deep\). We define a rhythm \(almost \ Winograd-deep\) if the chordal distances in the set \(\{1,2,...[\frac{n}{2}]\}\) have a unique multiplicity; the occurance of every chordal distance is not necessary. \\
\begin{theorem}
    Every element of \(\mathcal{T}\) is almost Winograd-deep.
\end{theorem}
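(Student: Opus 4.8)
\emph{Proof plan.} This should follow by complementing the analysis in the proof of Theorem~5.1 with a short counting argument. The plan has two steps: (i) reduce ``almost Winograd-deep'' to the statement that an element of \(\mathcal{T}\) with \(p\) onsets realizes at most \(p-1\) distinct chordal distances; (ii) establish that statement using the rigid structure of \(V(p,k)\) in each of the families \(A,\tau_1,B,\tau_2\).

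\emph{The reduction.} Fix \(x\in\mathcal{T}\) with \(p\) onsets, and let \(d_1,\dots,d_r\) be the chordal distances actually occurring in \(x\), with multiplicities \(\mu_1,\dots,\mu_r\ge 1\). Since \(x\) has \(\binom{p}{2}=1+2+\cdots+(p-1)\) pairs of onsets, \(\sum_i\mu_i=\binom{p}{2}\). By Theorem~5.1 (every element of \(\mathcal{T}\) is Erdos-deep) each value in \(\{1,2,\dots,p-1\}\) occurs among the \(\mu_i\), so \(r\ge p-1\). Hence, as soon as \(r\le p-1\) is known, we get \(r=p-1\) and the map \(i\mapsto\mu_i\) is a bijection of \(\{1,\dots,r\}\) onto \(\{1,\dots,p-1\}\); in particular it is injective, i.e. distinct occurring chordal distances have distinct multiplicities. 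Since every chordal distance in \(\{1,\dots,[\frac{n}{2}]\}\) occurring in \(x\) is one of the \(d_i\), this is exactly the assertion that \(x\) is almost Winograd-deep. So everything reduces to the claim: \emph{an element of \(\mathcal{T}\) with \(p\) onsets realizes at most \(p-1\) distinct chordal distances.}

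\emph{The count.} The chordal distance between onset \(i\) and onset \(i+j\) equals \(\min(S,\,n-S)\), where \(S\) ranges over the sums of \(j\) cyclically consecutive entries of the gap sequence \((V_0+1,\dots,V_{p-1}+1)\) of consecutive-onset distances. For \(x\in\tau_1\), with \(V=(m+1)\,m\,(m+1)\,m\cdots(m+1)\) (and \(A\) is the case \(m=1\)), this gap sequence is \((m+2,m+1,m+2,m+1,\dots,m+2)\): it is \(2\)-periodic except for the single adjacency of two \(m+2\)'s forced by the palindromic wrap-around. Hence among any \(j\) consecutive gaps the number of \(m+2\)'s is \(\lfloor j/2\rfloor\) or \(\lceil j/2\rceil\), so the window-sums of length \(j\) take at most two consecutive integer values \(S_j,S_j+1\); moreover \(S_{j+1}>S_j+1\) (each extra gap adds at least \(m+1\ge 2\)), so these pairs are disjoint across \(j\), and for \(j\le\frac{p-1}{2}\) both values are \(<n/2\) and hence equal their own chordal value. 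As \(p\) is odd throughout \(\mathcal{T}\) (e.g. \(2k-3p=1\) for \(A\); the palindromic \(V\)-patterns of \(B,\tau_1,\tau_2\) also have odd length), ``\(j\) onsets apart'' and ``\(p-j\) onsets apart'' realize the same set of chordal distances, so letting \(j\) run over \(1,\dots,\frac{p-1}{2}\) produces exactly \(2\cdot\frac{p-1}{2}=p-1\) distinct chordal distances. For \(x\in B\) and \(x\in\tau_2\), with \(V=(m+1)\,m\,(m+1)(m+1)(m+1)\,m\,(m+1)\cdots\), the gap sequence equals \(m+2\) everywhere except at the almost equally spaced positions carrying the digit \(m\); the same window count applies, the \(j\)-window sums again take only boundedly many values, and the (longer, mechanical) bookkeeping together with the complementarity \(j\leftrightarrow p-j\) once more collapses the chordal distances to exactly the \(p-1\) found in the proof of Theorem~5.1.

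\emph{Main obstacle.} The delicate point is precisely this last piece of bookkeeping for \(B\) and \(\tau_2\): because the \(m\)-positions of \(V\) are not perfectly evenly spaced within a period, one must check that a window of fixed length never yields a third distinct sum and that, after folding by \(S\mapsto\min(S,n-S)\), the value-sets of different window-lengths stay disjoint, so that the count is exactly \(p-1\) and not larger; the small cases of \(p\) and the distinction between the base rhythms and their case-2 extensions also need a remark. Once \(r=p-1\) is secured for all four families, the reduction closes the proof at once.
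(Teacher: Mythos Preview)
Your argument is correct, but you are working harder than necessary: step (i) already contains a complete proof, and step (ii) --- together with the ``main obstacle'' you flag for $B$ and $\tau_2$ --- is superfluous. You record both that the Erdos-deep property supplies $p-1$ chordal distances with multiplicities $1,2,\dots,p-1$, and that $\sum_i \mu_i = \binom{p}{2} = 1+2+\cdots+(p-1)$. Put these two facts together: those $p-1$ distances already account for all $\binom{p}{2}$ unordered pairs of onsets, so no further chordal distance can occur. Hence $r=p-1$ is automatic, and the assignment $d_i\mapsto\mu_i$ is a bijection onto $\{1,\dots,p-1\}$. In particular, for \emph{any} rhythm with $p$ onsets, Erdos-deep implies almost Winograd-deep by this pigeonhole; no structural information about $V(p,k)$ beyond Theorem~5.1 is needed, and your window-sum count is never called upon.

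This is a genuinely different and cleaner route than the paper's. The paper goes back to the explicit list of arc-distances $D_1,\dots,D_{p-1}$ exhibited in the proof of Theorem~5.1, partitions them into those that are already chordal (a set $E$) and those whose complement $n-D_i$ is the chordal representative (a set $F$), and then asserts $E\cap F=\emptyset$ to conclude that after passing to chordal distances one still has $p-1$ distinct values with $p-1$ distinct multiplicities. Your counting argument sidesteps this case analysis entirely: it needs neither the disjointness check $E\cap F=\emptyset$ nor the family-by-family bookkeeping for $A,\tau_1,B,\tau_2$ that you were bracing for. The trade-off is that the paper's approach, by staying concrete, also yields the explicit correspondence between chordal distances and multiplicities, whereas yours gives only existence of the bijection.
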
 
\begin{proof} 
Let \(x \in \tau_1\). We have proved in the previous section that \(x\) is Erdos deep. The available set of multiplicities is - \(\{1,2,...,k-1\}\). Let \(D_1,D_2,...D_{k-1}\) be the distances we used in section 3. Some of these are chordal distances while some of them are not. If \(D_j\)s are chordal distances, we can see easily that the chordal distance associated with any multiplicity is unique. We name it set \(E\). Let \(D_i\)s be not chordal. If \(D_i\) is not chordal, we have \(n-D_i <[\frac{n}{2}]\) so \(n-D_i\) becomes chordal. The multiplicity of \(D_i=\) multiplicity of \(n-D_i=l\) (let), then \(d-D_i\) is the chordal distance associated with \(l\). We name the set of all \(n-D_i\)s as \(F\). As \(E \cap F = \phi\), we have all the distances occurring in \({1,2,...[\frac{n}{2}]}\) having unique multiplicities. The proof for any element of \(\tau_2\) follows similarly. 
\end{proof} 
In the process, we have found another set of distances (which is the set of all possible chordal distances in \(x\)) that can be used to prove that \(x\) is Erdos-deep. Let \(G=\) set of all chordal distances of \(x\), \(H=\) set of all multiplicities, and we consider the natural mapping \(f:G \rightarrow H\) where \(g_1\) is mapped to \(h_1\) if \(g_1\) occurs \(h_1\) times. As \(x\) is almost-Winograd deep, \(f\) is one-one. Also, since \(f\) is Erdos-deep, \(f\) is surjective. Therefore, \(f\) is a bijective mapping. We sum all this up and state the following theorem, which is the heart of the last two sections. 
\begin{theorem}
    For every \(x \in \mathcal{T} \) there exists a bijective map between the set of all chordal distances and the set of all multiplicities.
\end{theorem}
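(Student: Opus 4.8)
The plan is to take the desired bijection to be precisely the map $f$ already isolated in the paragraph preceding the statement: let $G$ be the set of chordal distances that actually occur in $x$, let $H$ be the set of its multiplicities, and let $f : G \to H$ send a chordal distance to the number of times it occurs. First I would observe that $f$ is well defined with codomain $H$ essentially by the definitions, so the entire content of the theorem is that $f$ is both injective and surjective; each of these two halves will be reduced to one of the two theorems already proved.

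For injectivity I would invoke the almost Winograd-deep theorem. If $\delta, \delta' \in G$ satisfied $f(\delta) = f(\delta') = \ell$, then two distinct chordal distances in $\{1, 2, \dots, [\frac{n}{2}]\}$ would carry the same multiplicity $\ell$, contradicting the uniqueness of multiplicities among the occurring chordal distances. Hence $f$ is one-to-one.

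For surjectivity I would use the Erdos-deep theorem, but in the sharpened form extracted inside the proof of almost Winograd-deepness: there one exhibits, for each multiplicity $\ell \in H$, an explicit distance $D_\ell$ realising it, and then replaces $D_\ell$ by $n - D_\ell$ whenever $D_\ell > [\frac{n}{2}]$, noting that the replacement is chordal and still has multiplicity $\ell$. Running through that list shows every $\ell \in H$ lies in the image of $f$. Combining the two steps, $f$ is a bijection, and the argument for $x \in \tau_2$ is word for word the same once one feeds in the $\tau_2$ cases of the two preceding theorems; since $\mathcal{T} = \tau_1 \cup \tau_2$ this settles every $x \in \mathcal{T}$.

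The step I expect to be the main obstacle is surjectivity, because ``Erdos-deep'' as defined only supplies a distance --- possibly non-chordal --- of each multiplicity, whereas the statement is about chordal distances. The real work is to confirm that the passage $D_\ell \mapsto n - D_\ell$ never collapses two previously distinct multiplicities, i.e.\ that the family $\{D_1, D_2, \dots\}$ built in Section 3 can be converted distance by distance into chordal distances without collision. This is exactly the bookkeeping already done to establish almost Winograd-deepness, so no genuinely new idea is required, but it is the point demanding care.
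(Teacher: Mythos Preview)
Your proposal is correct and follows exactly the same route as the paper: define $f:G\to H$ sending each occurring chordal distance to its multiplicity, use almost Winograd-deepness for injectivity and the Erdos-deep property (after converting non-chordal $D_\ell$ to $n-D_\ell$) for surjectivity. Your identification of the chordal-vs-non-chordal bookkeeping as the only delicate point is also precisely where the paper locates the work, inside the proof of Theorem~6.2.
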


\section{Homometric Rhythms}
\begin{figure}[htp]
    \centering
    \includegraphics[scale=0.45]{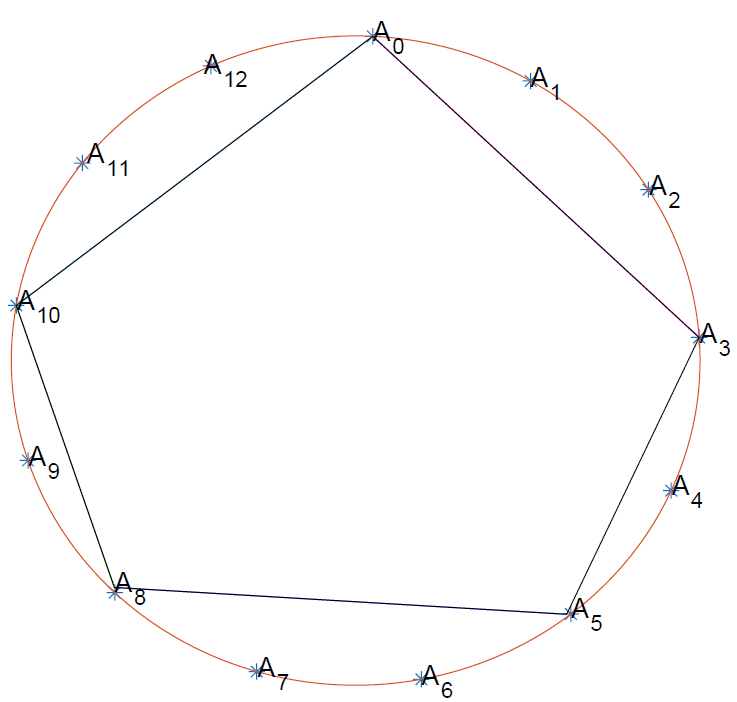}
    \caption{\(E(5,13)\)}
\end{figure}
 For any rhythm, all the rotated structures are trivially homometric to one another. It is very easy to prove that these are the only homometric rhythms for any member of \(\mathcal{T}\).  
\begin{theorem}
    For every member \(x\) of \(\mathcal{T}\) the rotated rhythms are the only ones homometric to \(x\). 
\end{theorem}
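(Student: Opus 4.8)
The plan is to reconstruct the whole cyclic gap-sequence of any competitor rhythm from its chordal-distance histogram and to show it must agree with that of $x$ up to a rotation. I would begin with two cheap reductions. First, rotations of $x$ are automatically homometric to $x$: a chordal distance depends only on the unordered pair of onsets and a global shift does not change it. Second, because $V(p,k)$ is a palindrome the cyclic gap-word of $x$ is itself a palindrome, so the reflection of $x$ is already one of the rotations of $x$. Hence it suffices to show: if $y$ is a rhythm of the same length $n$ with the same chordal-distance histogram as $x$, then the cyclic gap-word of $y$ is a rotation, or the reversal of a rotation, of that of $x$; by the palindromicity this then forces $y$ to be a rotation of $x$.

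The first step is to show $y$ has the same multiset of gaps as $x$. Note $y$ has the same number $p$ of onsets, since the total number of pairs is $\binom{p}{2}$. Let $s$ be the smallest gap occurring in $x$, so $s=m+1\ge 2$ and the only other gap is $s+1$. Since every arc spanning two or more gaps has length at least $2s>s$, in any rhythm whose minimal gap is $s$ the distance $s$ occurs exactly as many times as the gap $s$ itself; as the histograms agree and the smallest distance present is $s$, it follows that $y$ has minimal gap $s$ and exactly as many gaps equal to $s$ as $x$ does. The remaining gaps of $y$ are each $\ge s+1$ and have the same total length as the remaining gaps of $x$ (which are all $s+1$), so they must all equal $s+1$. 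I would then encode the cyclic gap-word of $y$ as a binary word, with $B$ for a gap $s$ and $A$ for a gap $s+1$.

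The second step is to pin down the arrangement of the $A$'s and $B$'s. Because $s\ge 2$, an arc of total length $2s+1$ consists of exactly one $A$ and one $B$, so the multiplicity of the distance $2s+1$ equals the number of mixed consecutive pairs in the cyclic word, i.e. $p$ minus the number of monochromatic consecutive pairs; writing the word in maximal runs, the number of monochromatic consecutive pairs is $p-2r$ where $r$ is the common number of $A$-blocks and of $B$-blocks. Matching against the value computed for $x$ in Section 5 forces $r$, and since $x$ has one $B$ per block, the count of $B$'s together with $r$ forces every $B$-block of $y$ to be a singleton. For $x\in\tau_1$ this leaves only one cyclic word (the alternating one with a single doubled $A$), so $y$ agrees with $x$ up to rotation. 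For $x\in\tau_2$ I would push the same idea one level deeper: using $s\ge 2$ and the absence of $BB$ in $y$, the multiplicities of $3s+1$, $3s+2$ and $4s+3$ count respectively the $A$-blocks of size exactly $1$, of size $\ge 2$, and of size $\ge 3$, and matching these against $x$ forces the $A$-block sizes of $y$ to be one block of size $2$ and all the rest of size $3$; since there is a unique cyclic word with singleton $B$-blocks and that $A$-block pattern, $y$ again agrees with $x$ up to rotation (and reversal). Either way $y$ is a rotation of $x$.

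The hard part will be the bookkeeping in this second step, above all for $\tau_2$: one has to be certain that each distance used is produced only by the intended kind of arc, both because distinct block-patterns can a priori realize the same arc-length and because the chordal distance is the minimum of an arc and its complement, so a "long'' arc can masquerade as a short one in small cases. The hypothesis $\gcd(p,k)=1$ together with $m\ge 1$ — equivalently $s\ge 2$ — is precisely what rules out the dangerous coincidences (such as $2s=s+1$, or $4s=3s+2$, or a gap equalling $2s+1$), and checking that these exclusions really do cover every case, and carrying the $\tau_2$ block analysis through to the end, is the only genuinely laborious part of the argument.
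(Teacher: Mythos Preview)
Your proposal is correct and in fact more complete than the paper's own argument. Both begin identically: use the absence of any distance below $s$ to conclude that the two smallest distances $s$ and $s+1$ can only occur as single gaps, so a homometric $y$ must have the same gap \emph{multiset} as $x$. At that point the paper simply writes ``One can clearly see that the original rhythm $x$ or some of its rotated forms are the only possibilities here'' and stops --- but the gap multiset alone does \emph{not} determine the cyclic arrangement (for instance, with four $3$'s and three $2$'s the word $3332232$ is not a rotation of $3232323$), so the paper is tacitly using further homometry information without saying so. Your second step, reading off the block structure from the multiplicities of $2s+1$, $3s+1$, $4s+3$, is exactly the missing argument and is the honest way to finish.

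One small correction to your bookkeeping for $\tau_2$: once the $B$-blocks are known to be singletons, an arc of length $3s+2$ is any consecutive triple with letters $\{A,A,B\}$, so besides $AAB$ and $BAA$ it also picks up every $ABA$, of which there are $r$; hence $\mathrm{mult}(3s+2)=r+2\cdot\#\{A\text{-blocks of size}\ge 2\}$, not just the latter count. Similarly $\mathrm{mult}(4s+3)=2\cdot\#\{\ge 2\}+2\cdot\#\{\ge 3\}$, since $AABA$ and $ABAA$ contribute as well as $AAAB$ and $BAAA$. This does not hurt you: $r$ and $\#\{\ge 2\}=r-\#\{\text{singletons}\}$ are already known from $\mathrm{mult}(2s+1)$ and $\mathrm{mult}(3s+1)$, so you can still extract $\#\{\ge 3\}$, and then the total $A$-count $3r-1$ forces one block of size $2$ and the rest of size $3$, exactly as you claim. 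Your caveat about complementary arcs is well placed; the bounds $p\ge 5$ (resp.\ $p\ge 11$) together with $s\ge 2$ keep all the distances you use strictly below $n/2$.
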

\begin{proof}
We prove it for any element of \(A\), result follows similarly for \(\tau_1\). Let \(x \in A\) with \(p\) onsets. Then, chordal distances \(3\) and \(2\) occurs \(\frac{p+1}{2}\) and \(\frac{p-1}{2}\) times respectively. Let \(y\) be a rhythm homometric to \(x\). Distance \(3\) can be formed in the pattern \([3]\) or \([2+1]\). The second case is not possible in \(y\) as chordal distance \(1\) is absent in \(x\). Therefore, each chordal distance \(3\) is formed as \(2\) rests between two consecutive beats. Similarly, we see that chordal distance \(2\) is formed as \(1\) rest between two consecutive beats. So we need to distribute \(\frac{p+1}{2}\) pairs of onsets such that the distance between them is 3 and \(\frac{p-1}{2}\) pairs of onsets such that distance between them is \(2\). One can clearly see that the original rhythm \(x\) or some of its rotated forms are the only possibilities here. Proof for elements of \(\tau_2\) follow similarly. Hence, we have proved our claim. 
\end{proof} 

This result is also easy to guess by intuition, considering that the Euclidean rhythm is one unique rhythm for any \((p,k)\). If we speak about isospectral vertices, each rhythm is obviously homometric to itself, and by the symmetry of all the rhythms along the vertical axis, each pair of the symmetric vertices is thus isospectral to each other. For example in figure \(1\), vertices \((A_3, A_{10}),(A_5, A_8)\) are pairwise isospectral. Now if we consider the rotated rhythms whose one vertice lies on \(A_0\), we can take the isospectral vertices accordingly in the same way. 
We now develop the observations made in Section 3 even further. \\ 

For any \(x \in A\), all the possible rhythms formed by rotations are evenly spaced. Total number of combinations possible by taking digits of \(V(p,k)=\) the structures where chordal distance \(2\) occurs in consecutive pairs of onsets \(+\) structures where chordal distance \(2\) does not occur in consecutive pairs of onsets (total number of such structures \(=p\), it is the same as the number of ways \(\frac{p-1}{2}\) `1's can be distributed among \(p\) places so that two `1's are not side by side). All these rhythms are the only homometric rhythms of \(x\) that have an onset at the first pulse. Also, \(V(p,k)\) representation of each of these structures can be made a palindrome by giving a particular rotation! This matches the fact that these are homometric to \(x\), we can say that an isomorphism exists between each of them. All of the \(p\) rhythms can be obtained from \(x\) by suitable transformations. We select a string of consecutive digits of \(V(p,k)\) starting from left digit. For each such selection, a symmetric selection from the right digit can also be made. Observe that by swapping any string (reversing the digits of the string in \(V(p,k)\)) we get one homometric rhythm of \(x\). Strings with \(2\) as the extreme right digit (when taking selections starting from the left digit) give us the same \(V(p,k)\), so we do not consider them. In \(E(p,n)\), there are \(p\) beats, total number of `1's in \(V(p,k)=\frac{p-1}{2}\), total number of strings \(=\frac{p-1}{2} \times 2=p-1\), swapping each of which gives us one rhythm homometric to \(x\). One can also observe this phenomenon by looking at its geometric representation. For example, in figure \(1\), \(A_3 \rightarrow A_2, \ A_{10}
\rightarrow A_{11}, \ A_5 \rightarrow A_6\) and \(A_{10} \rightarrow A_{11}, \ A_8 \rightarrow A_7\) and \(A_3 \rightarrow A_2 \) swappings give us all the homometric rhythms that have an onset at the first pulse. Like this, such swappings can be studied for more complicated rhythms. 

These rhythms can also be looked upon as a group action of \(S_n\) on \(x\). Let \(f\) be the group action. Then \(A_3 \rightarrow A_2\) swapping can be looked upon as a group action of the cycle \(S=(3 \ 4)\) on \(x\). \(f\) is defined as : \[\{A_j: j \in \{1,2,...,13\}\} \rightarrow \{A_{S(j)}: j \in \{1,2,...,13\}\}\] 

\section{Constructing new rhythms}
In the previous section, we saw how any pair of isospectral vertices look for any rhythm of \(\mathcal{T}\) and any of its homometric pair. An  operation which involves isospectral vertices is illustrated below: 
\begin{definition}[Pumping \cite{pumping}] Let \(P_1\), \(P_2\) be a homometric pair corresponding \(E(p,n)\) with isospectral vertices \(p_x, \ p_y\) respectively. \(S=\{+s_i, -s_i : i \in \{1,..,r\}\}\). We define a \((m,r,S)\) pumping of \(P_1, P_2\)  \((m \geq 1, r \geq 0)\) as a new pair of rhythms with number of time intervals\(\ =n'\)  and number of onsets \(=k'\) with \(n'=mn\) and \(k'=k+2r\). These are formed by replacing \(p_x\) in \(P_1\) with \(\{p_x, p_{x+s_i}, p_{x-s_i}\}\) and \(p_y\) in \(P_2\) with \(\{p_y,p_{y+s_i}, p_{y-s_i}\}\)    
\end{definition} 

\begin{figure}[htp]
    \centering
    \includegraphics[height = 9.5cm, width = 10cm]{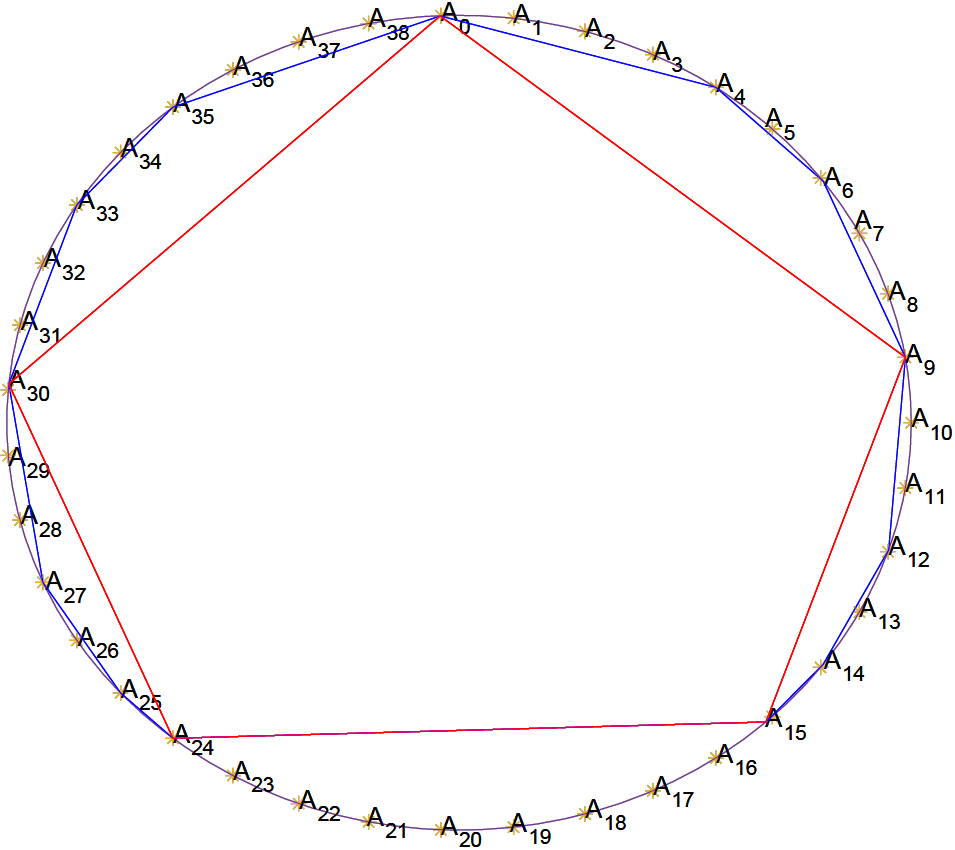}
    \caption{union of all points after \((3,2,S)\) pumping applied to \(E(5,13)\) and itself (taking itself as the homometric rhythm), where 
    \(S=\{+3,-3,+5,-5\}\)}
\end{figure}

It is quite apparent that any pair of rhythms formed by any pumping will be homometric to one another, and one can be obtained by giving the other one a suitable rotation. For the sake of constructing new rhythms, we can also apply different pumping to a pair of homometric rhythms; they would not remain homometric anymore. We can also apply pumping to a separate pair of isospectral points and take the union of all points to form a new rhythm.  \\
Pumping two rhythms is a way to systematically create new rhythms from \(\mathcal{T}\). Like this, we can use various other operations and algorithms to construct new rhythms. The various properties of them can be studied accordingly with appropriate tools. One should remember that we should always try to create these algorithms in such a manner that the connection between the newly formed rhythm and the rhythms from which it is formed is understandable while practically performing any musical composition. Some of the basic ways of doing this are illustrated: 
\begin{itemize}
    \item Addition: We can define addition between two rhythms in the most natural way: For rhythms \(P_1(V(p,k_1))\) and \(P_2(V(p,k_2))\), the palindrome number of \(P_1+P_2\) is defined as \(V(p,k_1+k_2)\) and the rests are distributed by component addition. Subtraction can be defined accordingly. \
    \item Addition by changing the component: If two rhythms have different numbers of onsets in general, one can define addition, which is not the usual component addition, but the components get adjusted sideways while doing the addition. One can see that multiple such adjustments are possible for any two rhythms. For example, \(V(p,k)\) representation of \(E(7,11)+E(5,18)\) can be written as \(2555552\) by arranging the components in a symmetric way about
    each other and performing addition. Subtraction can be defined accordingly.
    \item Introducing a set of onsets: We can introduce a set of onsets in a rhythm and distribute them in the rests of the rhythm in a definite manner. Let \(S\) be a set of onsets, \(P\) be a rhythm. \(P_S\) be the rhythm formed after this operation, then \(P_S\) is defined as the rhythm formed by adding the elements of \(S\) in \(P\) in corresponding positions (the process is quite similar to that of pumping). 
    \item Breaking the rhythms: We can break a rhythm to form two; the breaking is done such that there is a beat in the first rhythm formed at the end. These two can now be used accordingly in any composition. For example, \(V(5,8)\) can be broken down as \([\times \cdot \cdot \times \cdot \times \cdot \times]\) and \([\cdot \times \cdot \times \cdot \cdot \times]\).
    \item Taking the complement of a rhythm - We can take the complement rhythm for any rhythm of \(\mathcal{T}\), the onsets in the complement rhythm will have a palindromic structure. 
    \item Concatenation \cite{conc} - We can also take the concatenation of two rhythms of \(\mathcal{T}\). For example, \(E(5,13) \bigoplus E(5,18)\)= \[[\times \cdot \cdot \times \cdot \times \cdot \cdot \times \cdot \times \cdot \cdot \times \cdot \cdot \cdot \times \cdot \cdot \times \cdot \cdot \cdot \times \cdot \cdot \times \cdot \cdot \cdot \times]\]
    \item We can consider Euclidean rhythms with a different number of onsets and the same number of time intervals as an element of \(\mathcal{T}\) and use them accordingly in some compositions; there are definite mathematical relations between them and the element of \(\mathcal{T}\). 
\end{itemize} 

\begin{figure}[htp]
    \centering
    \includegraphics[scale=0.5]{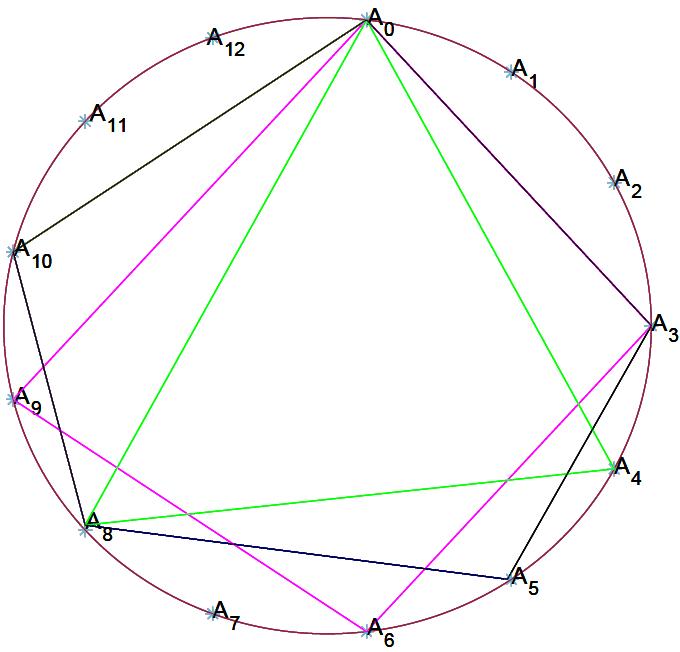}
    \caption{some other euclidean rhythms on \(13\) intervals} 
    \label{fig:galaxy}
\end{figure}
We can extend some of the ideas. After pumping a rhythm and itself and taking union of both the rhythms (like in figure \(2\)), we can divide the figure vertically (resulting in dividing the rhythm symmetrically), and then consider a rhythm homometric to one half; the corresponding rhythm homometric to the other half will be symmetric to the first one. We can then take the union of the points and form another rhythm; further properties of such rhythms can be studied. This can be done by first studying what the homometric rhythms of any arbitrary rhythm look like. One can explore this by taking the necessary ideas from Patterson's paper in crystallography\cite{patt}.

\section{Arithmetic Progressions} 
The \((n,p,k)\) values of the solution set \(\tau_1\) can all be written in matrix form. The matrix of the \(n\) values is:
\begin{equation}
\begin{pmatrix}
13 & 18 & 23 & 28 & 33 & ... & ...\\
18 & 25 & 32 & 39 & 46 & ... & ...\\
23 & 32 & 41 & 50 & 59 & ... & ... \\
28 & 39 & 50 & 61 & 72 & ... & ...  \\
33 & 46 & 59 & 72 & 85 & ... & ...  \\ 
38 & 53 & 68 & 83 & 98 & ... & ...  \\
... & ... & ... & ... & ... & ... & ...
\end{pmatrix}
\end{equation} 
This is a symmetric matrix, and we observe \(AP\)s can be formed by taking elements from any particular row (or column). The same thing can be observed as well in the matrices of \(p\) and \(k\) values. One can think of characterising all \(AP\)s formed by taking elements from the matrix. \\
For example, if there are two or more terms in the same row (column), then we have infinitely many elements in the same row (column). This is easy to see; if \(a\) and \(b\) are in same row, \(b=a+nd\) for some \(n\) (\(d\) is the common difference), and then all the elements \(a+mnd\) will be in the same row (column). This occurrence of infinitely many elements with the same number of onsets can be quite useful in constructing musical pieces. One can further analyse what will \(AP\)s look like if all the elements are in different rows (columns). This can be done by considering paths; one can go from one term to the next by some finite vertical and horizontal steps, we call this a path. By using the fact that the path sum should remain the same in every step, and how increments in the steps in both the directions occur, one can estimate the locations of the terms of the \(AP\), in turn estimating \((p,k)\) values.

We can also draw an interesting observation about the rhythms using Dirichlet's result on prime numbers in an AP, according to which all the \(AP\)s in the column (or rows) of the solution matrix of \(k\) and \(n\) values contain infinitely many primes. In other words, in any \(AP\) of the solution matrices, infinitely many rhythms exist whose number of time intervals (or total number of rests) is a prime. This result is quite interesting from the point of view of Indian Classical music. Various theoretical models of musical compositions can be created based on this result.  \\

We now state a result connecting any arbitrary sequence of Euclidean rhythms in which the \(V(p,k)\)s form an \(AP\). 
\begin{theorem}
    Let \(\{a_n\}\) be a sequence of arbitrary Euclidean rhythms such that \(\{V(p_n,k_n)\}\) forms an arithmetic progression satisfying either of the following conditions:
    \begin{enumerate}
        \item there exist some number of rests between the last onset and the first onset (of the next cycle) in \(a_1\)
        \item the common difference of \(\{V(p_n,k_n)\}\) is not a multiple of 10
    \end{enumerate}
    Then \(\{a_n\}\) contains infinitely many Euclidean rhythms with palindromic rests \cite{ap}. 
    \end{theorem}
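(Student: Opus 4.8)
The plan is to pass from rhythms to integers. Write $v_n := V(p_n,k_n)$, read as an ordinary decimal integer (the standing convention that each rest‑count is a single digit makes this unambiguous), so that the hypothesis says $v_n = v_1 + (n-1)d$ for some common difference $d\geq 1$, and so that $a_n$ has palindromic rests exactly when $v_n$ is a palindromic number. Thus the theorem is equivalent to: the progression $\{v_1+(n-1)d\}$ contains infinitely many decimal palindromes. The first observation is that hypotheses (1) and (2) together say precisely ``\emph{not} both $10\mid v_1$ and $10\mid d$'': having some rests between the last and first onset of $a_1$ means the last digit of $v_1$ is nonzero, i.e.\ $10\nmid v_1$, while (2) is literally $10\nmid d$. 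This is also necessary, since if $10\mid v_1$ and $10\mid d$ then every $v_n$ ends in $0$ and no $v_n\geq 10$ can be a palindrome. So the content is a sufficiency statement about palindromes in arithmetic progressions, and the strategy is to construct, for infinitely many lengths $L$, an $L$‑digit decimal palindrome $P$ with $P\equiv v_1\pmod d$; for $L$ large automatically $P\geq v_1$, so $P=v_n$ for some $n\geq1$, and distinct $L$ give distinct $n$.

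First I would factor $d=2^{a}5^{b}d'$ with $\gcd(d',10)=1$ and split $P\equiv v_1\pmod d$ by CRT into $P\equiv v_1\pmod{2^{a}5^{b}}$ and $P\equiv v_1\pmod{d'}$. Put $N:=\max(a,b,1)$ and write each candidate $P$ (of length $L\gg N$, with $L$ odd) as a concatenation $P=u\,g\,u^{R}$, where $u$ is the length‑$N$ prefix of $P$, $u^{R}$ its reversal (the length‑$N$ suffix), and $g$ a palindrome of length $L-2N$ filling the interior; then $P$ is a palindrome iff $g$ is, and $g$ may legitimately begin with zeros. Since $2^{a}5^{b}\mid 10^{N}$, the residue $P\bmod 2^{a}5^{b}$ depends only on the last $N$ digits, i.e.\ only on $u$; I would pin $u$ down once and for all so that $u^{R}$, read as a number, is $\equiv v_1\pmod{2^{a}5^{b}}$ \emph{and} the leading digit of $u$ (which is also the trailing digit of $P$) is nonzero. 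This is the step that uses the hypothesis: if $10\mid d$ then (1) forces the trailing digit to be $v_1\bmod 10\neq 0$, which is consistent; if $10\nmid d$ then $2^{a}5^{b}$ is $1$, a power of $2$, or a power of $5$, and a short digit‑by‑digit ($2$‑adic or $5$‑adic) argument produces a suitable suffix with nonzero last digit.

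With $u$ fixed, the remaining congruence $P\equiv v_1\pmod{d'}$ becomes $\mathrm{val}(g)\cdot 10^{N}\equiv v_1-\bigl(\text{the now‑fixed contribution of }u\text{ and }u^{R}\bigr)\pmod{d'}$, and since $10$ is invertible mod $d'$ this reads $\mathrm{val}(g)\equiv\rho\pmod{d'}$ for a residue $\rho$ depending only on $L$. So everything reduces to the lemma: \emph{for every sufficiently large odd $\ell$ and every residue $r$ modulo $d'$ (with $\gcd(d',10)=1$) there is a palindrome of length $\ell$, interior zeros allowed, whose value is $\equiv r\pmod{d'}$.} To prove it, write $\ell=2s+1$ and $e:=\mathrm{ord}_{d'}(10)$; a length‑$\ell$ palindrome with digit string $g_0\cdots g_{2s}$ ($g_j=g_{2s-j}$) has value $\equiv 10^{s}\bigl(g_s+\sum_{i=1}^{s}g_{s+i}(10^{i}+10^{-i})\bigr)\pmod{d'}$, where $10^{-1}$ denotes the inverse of $10$ modulo $d'$. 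Setting $g_{s+i}=0$ whenever $e\nmid i$, and using $10^{e}+10^{-e}\equiv 1+1=2\pmod{d'}$, this collapses to $10^{s}\bigl(g_s+2\sum_{j=1}^{\lfloor s/e\rfloor}g_{s+je}\bigr)$; since $2$ is invertible modulo $d'$ (as $d'$ is odd) and $\sum_j g_{s+je}$ sweeps out $\{0,1,\dots,9\lfloor s/e\rfloor\}$, once $9\lfloor s/e\rfloor\geq d'-1$ this hits every residue. Feeding the lemma back, for each large odd $L$ we get a palindrome $P=u\,g\,u^{R}$ with $P\equiv v_1\pmod d$ and nonzero leading digit; these are distinct, increasing, and all lie in the progression, which finishes the proof.

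The genuine obstacle is the middle step: keeping the leading digit nonzero while controlling $P$ modulo the $2$‑ and $5$‑parts of $d$. This is exactly where (1) and (2) are used — without them the last digit is forced to $0$ — and it needs the small but slightly fiddly base‑$10$ bookkeeping above, with the cases $10\mid d$ and $10\nmid d$ handled separately. By comparison the modulo‑$d'$ part is clean, thanks to the identity $10^{e}+10^{-e}\equiv2$, which converts an equally spaced run of free interior digits into an unrestricted counter modulo $d'$. One further, purely expository, point to settle is that $V(p_n,k_n)$ is being read as a decimal integer (implicitly all rest‑counts at most $9$, so that ``palindrome'' and ``arithmetic progression'' make literal sense), and that the common difference is positive, so that the terms genuinely grow.
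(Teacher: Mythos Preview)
Your argument is correct and reaches the same conclusion as the paper, but the organization is genuinely different. The paper (following Harminc--Sot\'ak, whom it cites) builds the palindrome in one shot: it takes $V(p_1,k_1)$ as the low block, its digit reversal as the high block, and in between inserts a run of $s$ ones at positions $e,2e,\dots,se$ where $e$ is (a multiple of) $\mathrm{ord}_c(10)$; since each $10^{je}\equiv 1\pmod c$, the middle contributes exactly $s$ modulo $c$, and $s$ is chosen to kill the residue. Your version instead splits by CRT: you fix an outer block $u$ once to match $v_1$ modulo $2^a5^b$ with nonzero leading digit, and then prove a standalone lemma that length\nobreakdash-$\ell$ palindromic middles hit every residue modulo the $10$\nobreakdash-coprime part $d'$, using the symmetric identity $10^{e}+10^{-e}\equiv 2\pmod{d'}$ and free digits at positions $s\pm je$. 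Both approaches ultimately place controllable digits at spacing $e=\mathrm{ord}(10)$; the paper counts ones, you sum free digits in $\{0,\dots,9\}$.

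What each buys: the paper's construction is shorter to state and makes the palindrome completely explicit in terms of $v_1$. Your modular decomposition is cleaner to verify, handles the $2$\nobreakdash--$5$ part uniformly (the paper's first case tacitly assumes $d=2^m5^n$ with no coprime factor), and isolates the ``every residue is hit'' statement as a reusable lemma. Your closing remark that $V(p_n,k_n)$ must be read as a decimal integer with single\nobreakdash-digit entries is a point the paper leaves implicit and is worth flagging.
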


    \begin{proof}
    It can be easily proved that we cannot find any Euclidean rhythm with palindromic rests if both conditions do not hold for any sequence \(\{a_n\}\). Let either of them hold. Let \(d\) be the common difference of \(\{V(p_n,k_n)\}\), \(10|d\) (i.e., the second condition does not hold, and the first condition holds). Then by the first condition, \(10 \not| \ V(p_1,k_1)\). Let \(V(p_1,k_1)=[v_qv_{q-1}...v_1v_0]\); we define \(V'(p_1,k_1)=[v_0v_1...v_{q-1}v_q]\). Let \(d=2^m5^n\), then for any \(l=max\{q,m,n\}+r \ (r \in \mathbb{Z})\) the elements \(V(p_1,k_1)+10^l V'(p_1,k_1) \in \{V(p_n,k_n)\}\) and are of the form \[[v_0v_1...v_{q-1}v_q0....0v_qv_{q-1}...v_1v_0]\] which in turn are either of the form \([(m+1)m(m+1)m....(m+1)]\) or  \([(m+1)m(m+1)(m+1)(m+1)m(m+1)....(m+1)m(m+1)]\) and thus the corresponding rhythms are Euclidean rhythms with palindromic rests in \(\{a_n\}\).
    Now let \(10 \not| \ d\), \(d=2^m5^n c\). We can show that there exists \(e = max\{q,m,n\}+ r \ (r \in \ \mathbb{Z})\) such that \(10^e \equiv 1 \ (mod \ c)\). If there exist some number of rests between the last onset and the first onset (of the next cycle) in \(a_1\), there exist \(s \in \{0,1,...c-1\}\) such that \(V'(p_1,k_1). 10^{e-q} + s \equiv 0 \ (mod \ c)\). Consider \[x=V'(p_1,k_1). 10^{es+e-q}+10^{es}+10^{(s-1)e}+..+10^e+V(p_1,k_1)\] 
    \[x-V(p_1,k_1)=V'(p_1,k_1).10^{es+e-q}+10^{es}+10^{(s-1)e}+..+10^e\] \[\equiv V'(p_1,k_1).10^{se}. 10^{e-q}+ \underbrace{1+1+...+1}_{s} \ (mod \ c)\]
    Thus, \[x-V(p_1,k_1) \equiv V'(p_1,k_1). 10^{e-q} + s \equiv 0 \ (mod \ c)\]
    So, \(c |x-V(p_1,k_1)\). We already know that \(2^m, 5^n |x-V(p_1,k_1)\); putting them together, we get \(d |x-V(p_1,k_1)\) i.e., \(x \in \{V(p_n,k_n)\}\). Also, \[x=[v_0 v_1 ... v_{q-1}v_q \underbrace{0..0}_{e-q-1}1 0..010..010..01 0..01 \underbrace{0..0}_{e-q-1}v_q v_{q-1}...v_1v_0]\] so \(x\) is a palindrome. Also, infinitely many such \(x\) are present in \(\{V(p_n,k_n)\}\) since \(10^{ze} \equiv 1 \ (mod \ c)\) for any \(z \in \mathbb{N}\). 
    Note that if there are no rests between the last onset and first onset (of the next cycle) in \(a_1\), then we consider \(a_2\) and carry on the same steps. Hence infinitely many \(V(p_i,k_i)\)s are palindromes, and the corresponding rhythms are Euclidean rhythms with palindromic rests. Therefore, we have proven our statement.
    \end{proof} 

We can generalise this result slightly to any arbitrary \(AP\) of rhythms. If \(\{a_n\}\) is a sequence of arbitrary rhythms such that \(\{V(p_n,k_n)\}\) forms an arithmetic progression satisfying either of the two conditions, then \(\{a_n\}\) contains infinitely many rhythms with palindromic rests. Now, these rhythms may not be Euclidean, but we can think of them as a series of operations on Euclidean rhythms. For example, if we consider using only the first two operations defined in Section 8, and if they are in \(\tau_1\), we can write them as a series of operations on Euclidean rhythms, or some rotation of an Euclidean rhythm, or some broken-down Euclidean rhythms and a residue rhythm (a rhythm that is neither of the three)! But this cannot be done in all cases; we need some extra conditions.  
\begin{notes}
    \item This whole process can also be done with other operations in other ways. 
\end{notes}
We now think about how to write any rhythm with palindromic rests, as in the series we have stated. We think of the most natural constructions; we first write the rhythm as an operation of an Euclidean rhythm and any other rhythm. But for that, we need two conditions: there has to be infinitely many rhythms with an odd number of palindromic rests, and all the digits in the palindrome number should be greater than, equal to, or less than the first digit (the last digit). Now we can write the other rhythm as an operation of an Euclidean rhythm and any other rhythm and continue this process accordingly. Thus, we get the series. Notice that broken-down Euclidean rhythms or rotated euclidean rhythms are only required towards the end of the process, and condition 2 is required to hold in every step. The residue rhythm can be further removed from this series by suitable further conditions, and those can be studied accordingly. Analysis of this kind can also be done similarly for elements of \(\tau_2\). We illustrate this with two rhythms:
\begin{itemize} 
    \item \(V_1(p,n)=4976794\) can be written as \[V(7,25)+V(5,28)-V(3,5) \ (rotated)-11\] \([\times \cdot \times \cdot \times]\) is the residue rhythm.  \\
    \item \(V_2(p,n)=2953592\) can be written as \[V(7,11)+V(5,38)-434 \ (broken \ Euclidean \ rhythm) -3\] \([\times \cdot \cdot \cdot \times]\) is the residue rhythm. 
\end{itemize}\
\begin{theorem}
     Let \(\{a_n\}\) be a sequence of arbitrary rhythms such that \(\{V(p_n,k_n)\}\) forms an arithmetic progression satisfying either of the following conditions:
    \begin{itemize}
        \item there exist some number of rests between the last onset and the first onset (of the next cycle) in \(a_1\)
        \item the common difference of \(\{V(p_n,k_n)\}\) is not a multiple of 10
    \end{itemize}
    Then \(\{a_n\}\) contains infinitely many rhythms with palindromic rests. 
    Furthermore, if the following conditions apply:
   \begin{enumerate}
    \item there are infinitely many rhythms with an odd number of palindromic rests
    \item all the digits in the palindrome number should be either greater than, equal to, or less than the first digit (last digit)   
\end{enumerate}
are also satisfied, then we can write these rhythms with palindromic rests as a series of operations of Euclidean rhythms, or some rotation of an Euclidean rhythm, or some broken down Euclidean rhythms and a residue rhythm (a rhythm that is neither of the three). 
\end{theorem}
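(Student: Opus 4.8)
The plan is to get the first assertion for free from the preceding theorem, and to obtain the second by greedily peeling Euclidean blocks off the palindrome number, letting a \emph{residue} absorb whatever fails to be Euclidean. For the first assertion I would simply re-run the proof of the preceding theorem with the word ``Euclidean'' deleted everywhere: that argument operates entirely on the digit-string $\{V(p_n,k_n)\}$ — it picks an exponent $e$ with $10^{e}\equiv 1\ (mod\ c)$, a shift $s$, forms $x=V'(p_1,k_1)10^{es+e-q}+10^{es}+\cdots+10^{e}+V(p_1,k_1)$, and checks that $d\mid x-V(p_1,k_1)$ and that $x$ is a base-$10$ palindrome — and at no point does it use that the $a_n$ are Euclidean; the same holds for the ``only if'' remark that no palindrome occurs when both conditions fail. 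So the construction yields infinitely many indices $n$ with $V(p_n,k_n)$ palindromic, i.e.\ infinitely many $a_n$ with palindromic rests.

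For the second assertion, fix one such palindromic rhythm $R$ with $V(R)=[v_q v_{q-1}\cdots v_1 v_0]$, $v_i=v_{q-i}$, $p=q+1$ onsets, $k=\sum_i v_i$. By hypothesis (1) I may assume $k$ odd, which forces $p$ odd since an even-length palindrome has even digit-sum; by hypothesis (2), after passing to the dual case if necessary, I may assume $v_i\ge c:=v_q$ for all $i$. I would induct on $\mu(R):=(p,\sum_i v_i)$ in the lexicographic order. If all $v_i=c$ then $V(R)=[c^{\,p}]=V(p,cp)$ is already the rest-vector of the trivial Euclidean rhythm $E(p,cp)$ and we are done. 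Otherwise put $W:=V(R)-[c^{\,p}]$: this is again a palindrome with a $0$ at each end and strictly smaller digit-sum, and $V(R)=V(p,cp)+W$ is an instance of the componentwise addition of Section 8. Deleting the equal numbers of leading and trailing zeros of $W$ exhibits $W$ as the rest-vector of a rhythm with fewer onsets — obtained from $R$ by the ``breaking'' and rotation operations of Section 8 — of strictly smaller $\mu$. If that smaller rhythm still satisfies (2), the induction hypothesis finishes it; if not, it is too irregular to be refined further and I declare it the residue and stop. Unwinding, $V(R)$ has been written as a finite $\pm$-combination of rest-vectors of Euclidean rhythms, of rotations of such, and of broken-off Euclidean rhythms, plus one residue term; termination is guaranteed because $\mu$ strictly drops at each peel. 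The ``$\le$'' and ``$=$'' alternatives of (2) are handled dually, subtracting $V(R)$ from a constant block and using the ``addition by changing the component'' operation of Section 8 to realise the sideways alignments — which is how the worked examples $V_1=4976794$ and $V_2=2953592$ are meant to be read.

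The hard part will be exactly this alignment step. When a peeled Euclidean rest-vector has a different number of onsets from the current remainder one must invoke ``addition by changing the component'', and it is not obvious that a \emph{symmetric} such alignment always exists that keeps the running difference a palindrome with nonnegative digits, nor that it can be arranged to \emph{strictly decrease} $\mu$, since inserting onsets sideways can raise the onset count. This is precisely where I expect hypothesis (1) to be used — oddness of $p$ supplies a genuine central digit about which to pivot the alignment — and where hypothesis (2) keeps every intermediate difference nonnegative. Establishing that the complexity measure is genuinely well-founded under these alignment moves is the one step I do not expect to be routine; everything else is bookkeeping with the operations catalogued in Section 8.
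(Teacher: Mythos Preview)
Your proposal follows essentially the same route as the paper. The paper does not give a separate formal proof after stating this theorem; its justification is the discussion immediately preceding the statement, which (i) observes that the argument for the previous theorem never uses that the $a_n$ are Euclidean, so the first assertion carries over verbatim, and (ii) describes the same greedy procedure you formalise---write the palindrome as an operation of one Euclidean rhythm and a smaller remainder, repeat on the remainder, with rotated or broken Euclidean pieces appearing only near the end and whatever is left declared the residue---noting that condition (2) must persist at each step and that condition (1) supplies the odd length needed for the central pivot. Your explicit complexity measure $\mu(R)=(p,\sum_i v_i)$ and the specific choice of peeling off the constant block $[c^{\,p}]=V(p,cp)$ make the induction more precise than the paper's sketch, and your candid identification of the alignment step (the ``addition by changing the component'' move) as the non-routine point is accurate: the paper glosses over exactly this, simply asserting that the process continues and illustrating it with the two worked examples $V_1=4976794$ and $V_2=2953592$.
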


\section{Applications in Indian Classical Music}
We present some applications of Euclidean rhythms with palindromic rests in Indian Classical music. 
\begin{center}
    \includegraphics[width=15.9cm]{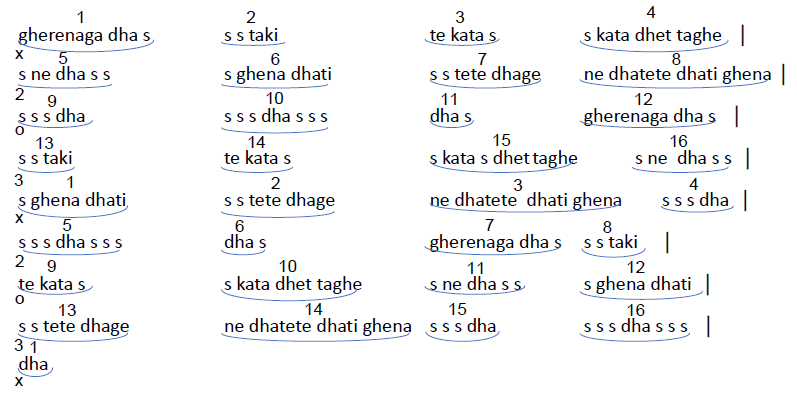}
\end{center}
This is a \(tehai\) in \(teen \ taal\) (the \(taal\) of \(16\) beats, following the division \(4/4/4/4\)) following \(chatusra \ jaati\). This comes to the \(sam\) on the \(33\)rd beat, in other words, this \(tehai\) comes to the \(sam\) after \(2\) cycles of \(16\) beats. Each \(s\) represents a rest, which is of length \(\frac{1}{4}\). The Euclidean rhythm with \(V(p,k)=32323\) has been used here. Although the pattern of rests is easy to observe, we see that a set of onsets is present between these rests rather than a single onset, which should not be the case! However, theoretically, each of these sets of onsets gets reduced to a single onset at some very high \(laya\), so all of these are Euclidean rhythms at some \(laya\) or tempo! This is a very beautiful concept, and it shows the importance of tempo in any musical composition. This concept will be used to show the application of Euclidean rhythms in all the other compositions as well. In all the rest of the compositions, only one part of the \(tehai\) has been written; the mathematics have been explained accordingly.
\begin{center}
    \includegraphics[width=15cm]{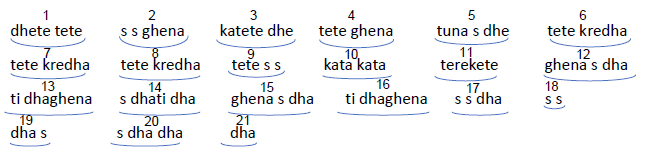}
\end{center}
This \(tehai\) follows \(chatusra \ jaati\) and is in \(teen \ taal\). The length of each rest is \(\frac{1}{4}\). Euclidean rhythm with \(V(p,k)=21212\) is used here. There is a gap of \(1\) length between each part, so the length of the \(tehai=21 \times 3+2=65=64+1\). This shows that the \(tehai\) comes back to \(sam\) after \(4\) cycles. 
\begin{center}
    \includegraphics[width=15.3
    cm]{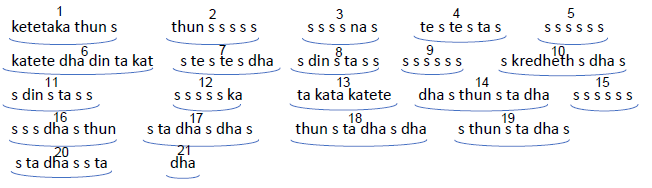}
\end{center}
This \(tehai\) follows \(tisra \ jaati\) and is in \(teen \ taal\). The length of each rest is \(\frac{1}{6}\). Euclidean rhythm with \(V(p,k)=43434\) is used here. There is a gap of \(1\) length between each part, so the length of the \(tehai=21 \times 3+2=65=64+1\). This shows that the \(tehai\) comes back to \(sam\) after \(4\) cycles. 
\begin{center}
    \includegraphics[width=15.9cm]{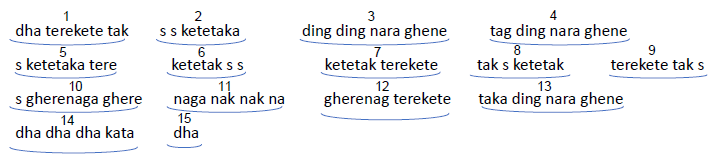}
\end{center}
This \(tehai\) follows \(chatusra \ jaati\) and is in \(pancham \ sawari \ taal\) (the \(taal\) of 15 beats, following the division \(4/3/5/3\)). Length of each rest is \(\frac{1}{4}\). Euclidean rhythm with \(V(p,k)=21212\) is used here. There are \(2\) rests between each part, each of length \(\frac{1}{4}\). Therefore, total length of the \(tehai=15 \times 3+\frac{1}{2} \times 2=46=45+1\). This shows that the \(tehai\) arrives at the \(sam\) after \(3\) cycles. 
\begin{center}
    \includegraphics[width=15.8cm]{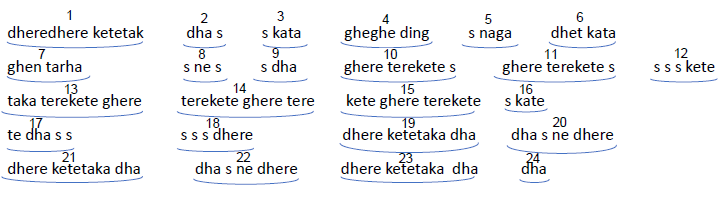}
\end{center}
This \(tehai\) follows \(chatusra \ jaati\) and is in \(jhaap \ taal\) (the \(taal\) of 10 beats, following the division \(2/3/2/3\)). The length of each rest is \(\frac{1}{4}\). Euclidean rhythm with \(V(p,k)=2121212\) is used here. There are no rests between each part of the \(tehai\), \(dha\) of the \(24\)th beat takes \(\frac{1}{2}\) length, total length of \(tehai=23.5 \times 3=70+\frac{1}{2}\) and so it comes to the \(sam\)
after \(7\) cycles.
\begin{center}
    \includegraphics[width=15.7cm]{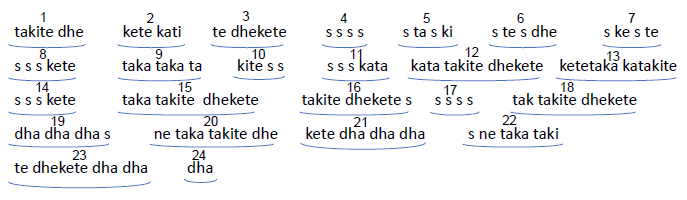}
\end{center}
This \(tehai\) follows \(chatusra \ jaati\) and is in \(dhamar \ taal\) (the \(taal\) of \(14\) beats, following the division \(5/2/3/4\)). The length of each rest is \(\frac{1}{4}\). Rhythm with \(V(p,k)\) representation \(53535\), which can in turn be generated by the addition operation we defined earlier, is used here. The operation is between the Euclidean rhythms with \(V(p,k)=21212\) and \(32323\). There are no rests between each part of the \(tehai\), the \(dha\) of the \(24\)th beat takes \(\frac{1}{2}\). Therefore, total length of the \(tehai=23.5 \times 3=70.5=70+\frac{1}{2}\), so it comes to \(sam\) after \(5\) cycles.
\begin{center}
    \includegraphics[width=15.9cm]{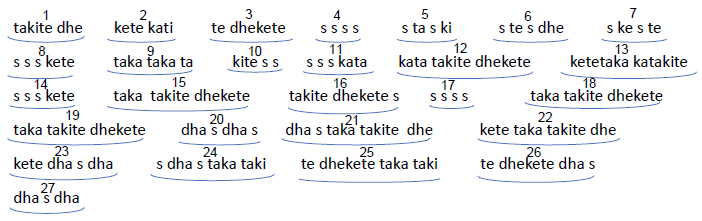}
\end{center}
This \(tehai\) is a different variation of the previous one in \(teen \ taal\) following \(chatusra \ jaati\), length of each rest being \(\frac{1}{4}\). Rhythm with \(V(p,k)\) representation \(53535\), which can in turn be generated by addition operation, is used here. This \(tehai\) is \(bedaam\), so there are no rests between each part of the \(tehai\). \(dha \ s \ dha \) of the \(27\)th beat is of length \(\frac{3}{4}\). As a result, the total length of the \(tehai=26.75 \times 3=80.25=80+\frac{1}{4}\). Thus the \(tehai\) comes to the \(sam\) after \(5\) cycles. 
\begin{center}
    \includegraphics[width=14.4cm]{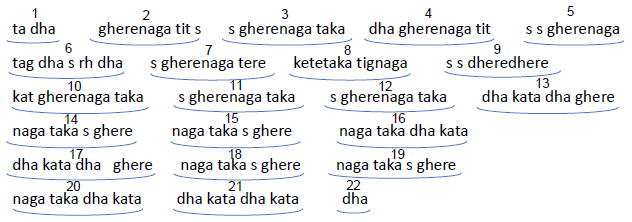}
\end{center}
This is a \(tehai\) in \(teen \ taal\) following \(chatusra \ jaati\), length of each rest being \(\frac{1}{4}\). The euclidean rhythm \([\times \cdot \cdot \times \cdot \cdot \times \cdot \times \cdot \cdot \times \cdot \times ] \) which is homometric to the rhythm with \(V(p,k)=21212\) is used here. This \(tehai\) is \(bedaam\), so there are no rests between each part of the \(tehai\). The \(dha\) of the \(22\)nd beat takes \(\frac{1}{2}\) length, the next part starts from then on. Therefore, the total length of the \(tehai=21.5 \times 3=64.5=64+\frac{1}{2}\). This shows that the \(tehai\) comes to the \(sam\) after \(4\) cycles.
\section{Concluding remarks}
The rhythms we worked on in this paper have a very simple structure in general. One can think of analysing them using various other mathematical methods. However, while analysing rhythms, there is always a restriction to the amount of mathematical abstractness one uses in analysing them. Any such analysis should always have some reflection on the practical demonstration of such rhythms. In a sense, one can say that many areas of theoretical research on rhythms are in some way restricted by how mathematically advanced musical compositions are; one needs to make advancements in the types of practises of music in order to pave the way for new theoretical interpretations of rhythms.\\
Based on our analysis and intuition, we believe that the analysis of \(AP\)s in Section 9 can be strengthened; we believe that any finite-length \(AP\) is totally contained in a particular row (or column). However, at present, we are unable to provide a definitive proof for this claim. We can analyse the rhythms further by adopting tools used by Toussaint in \cite{Toussaint}. Since every rhythm can be reinterpreted as a scale, the Euclidean rhythms with palindromic rests can be further analysed by drawing ideas from Clough and Douthett's work \cite{even}.
\section*{Acknowledgement}
I would like to thank Mr. Shibam Mondal and Mr. Ankit Saha for their ideas and help throughout. I would also like to express my deep gratitude to Professor Jean Paul Allouche for his suggestions. I would also like to thank all the referees for their precious corrections and the mathematicians, professors, musicologists, and musicians who will review this paper.

\end{document}